\title{Tamarkin's separation theorem for non-compact objects \\ in cotangent bundles}
\author{Yuichi Ike \and Tatsuki Kuwagaki}
\date{\today}
\begin{document}

\maketitle

\begin{abstract}
In this short note, we prove a Tamarkin-type separation theorem for possibly non-compact subsets in cotangent bundles.
\end{abstract}

\section{Introduction}

Let $M$ be a manifold and denote by $\pi \colon T^*M \to M$ its cotangent bundle. Let $\bK$ be a unital commutative ring.
Let $\bG$ be a subgroup of the discrete additive group $\bR$ of the real numbers and denote by $\Lambda_0^\bG$ the Novikov ring associated with $\bG$. We consider the additive action of $\bG$ on $M\times \bR_{u<c}\times \bR_t$ on the rightmost factor and the trivial action on the other factor. We denote the equivariant derived category of $\bK$-module sheaves on $M\times \bR_{u<c}\times \bR_t$ by $\Sh^\bG(M\times \bR_{u<c}\times \bR_t)$. We set
\begin{multline}
    \Sh_{\tau>0}^\bG(M\times\bR_{u<c}\times \bR_t) \\
    \coloneqq \Sh^\bG(M\times\bR_{u<c}\times \bR_t)/\lc \cE\in\Sh^\bG(M\times\bR_{u<c}\times \bR_t)\relmid \SS(\cE)\subset \lc \tau\leq 0\rc \rc,     
\end{multline}
where $\tau$ is the cotangent coordinate of $T^*\bR_t$ given by $dt$ and $\SS$ is the microsupport.

For $c \in (0,\infty]$, we denote by $\mu^\bG(T^*M;u<c)$ the microlocal category introduced in \cite{IK}, which is defined as the subcategory of $\Sh_{\tau>0}^\bG(M\times\bR_{u<c}\times \bR_t)$ spanned by the weak doubling movies.
Here, $\cE\in \Sh_{\tau>0}^\bG(M\times\bR_{u<c}\times \bR_t)$ is said to be a weak doubling movie if there exists a $\bG$-invariant subset $A\subset T^*M\times T^*_{\tau>0}\bR_t$ such that $\SS(\cE)\subset AA$, where
\begin{equation}\label{eq:doubling}
    \begin{split}
        AA &\coloneqq AA_h\cup AA_t\subset T^*M \times T^*\bR_{u<c} \times T^*_{\tau>0}\bR_t,\\
        AA_h &\coloneqq \lc (p,u,0,t,\tau) \relmid (p, t, \tau) \in A, u \ge 0 \rc \subset T^*M \times T^*\bR_{u<c} \times T^*_{\tau >0}\bR_t,\\
        AA_t &\coloneqq \lc (p, u, \upsilon, t, \tau) \relmid (p, t, \tau) \in A, u \ge 0, \upsilon=-\tau\rc \subset T^*M \times T^*\bR_{u<c} \times T^*_{\tau>0}\bR_t.     
    \end{split}
    \end{equation}
The microlocal category $\mu^\bG(T^*M;u<c)$ is a $\Lambda_0^\bG$-linear category.
For an object $\cE \in \mu^\bG(T^*M;u<c)$, its non-conic microsupport is denoted by $\musupp(\cE) \subset T^*M$.
For a closed subset $A'$ of $T^*M$, we define 
\begin{equation}
    \mu_{A'}^\bG(T^*M;u<c) \coloneqq \lc \cE\in \mu^\bG(T^*M;u<c) \relmid \musupp(\cE)\subset A'\rc. 
\end{equation}
A closed subset $A'$ of $T^*M$ is said to be \emph{end-conic} if there exists a disk bundle $D^*M$ in $T^*M$ cut out by some fiber metric such that $A' \setminus D^*M$ is conic, i.e., stable under the scaling action of $\bR_{\geq 1}$.

In this paper, we prove the following separation theorem, which is a slight generalization of the separation theorem for the original Tamarkin category~\cite{Tamarkin, GS14}.

\begin{theorem}\label{theorem:separation}
    Let $A'$ and $B'$ be end-conic closed subsets $T^*M$.
    Suppose that $\pi(A') \cap \pi(B')$ is compact and $A' \cap B' = \varnothing$.
    Then for any $\cE \in \mu_{A'}^\bG(T^*M;u<c)$ and any $\cF \in \mu_{B'}^\bG(T^*M;u<c)$, one has $\Hom_{\mu^\bG(T^*M;u<c)}(\cE,\cF)=0$.    
\end{theorem}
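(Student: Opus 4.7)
The plan is to adapt the classical Tamarkin separation argument~\cite{Tamarkin, GS14}, which for compact disjoint microsupports $A,B$ proceeds in three steps: (i) produce a compactly supported Hamiltonian isotopy $\varphi$ displacing $A$ from $B$; (ii) apply the GKS sheaf quantization to obtain a kernel $\cK$ acting on the microlocal category, together with a Novikov comparison morphism of Hofer cost $c := \int_0^1 \|H_t\|_\infty \, dt$; (iii) combine $\varphi_1(A) \cap B = \varnothing$ with the $\tau$-positive / Novikov structure to conclude $\Hom = 0$. The new input afforded by the present hypotheses is that, although $A'$ and $B'$ themselves are non-compact, the compactness of $\pi(A') \cap \pi(B')$ together with the end-conic structure confines the potential ``interaction'' of $A'$ and $B'$ to a compact subset of $T^*M$, reducing step (i) to the classical compact displacement.

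To carry this out, I would first fix a compact neighborhood $K$ of $\pi(A') \cap \pi(B')$ in $M$ and a closed disk bundle $\overline{D^*M} \subset T^*M$ large enough that both $A' \setminus D^*M$ and $B' \setminus D^*M$ are conic. The set $L := \overline{D^*M} \cap \pi^{-1}(K)$ is compact in $T^*M$, and outside $L$ the pair $(A',B')$ is automatically separated: points outside $\pi^{-1}(K)$ cannot lie in both $A'$ and $B'$ since their projections are disjoint there, while points outside $\overline{D^*M}$ lie in the conic regions where $A' \cap B' = \varnothing$. Since $A' \cap L$ and $B' \cap L$ are thus compact disjoint subsets of $T^*M$, a standard symplectic displacement argument produces a compactly supported Hamiltonian $H_t \in C_c^\infty([0,1] \times T^*M)$, supported in a slightly enlarged $L' \supset L$, whose time-$1$ flow $\varphi_1$ displaces $A' \cap L'$ from $B' \cap L'$. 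Being the identity outside $L'$ and using $A' \cap B' = \varnothing$, such a $\varphi_1$ satisfies $\varphi_1(A') \cap B' = \varnothing$ globally.

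With this compactly supported displacement in hand, steps (ii) and (iii) of the Tamarkin argument apply in the framework of~\cite{IK}: the sheaf quantization produces a kernel $\cK$ with $\musupp(\cK \star \cE) \subset \varphi_1(A')$ and a Novikov comparison morphism between $\cE$ and $\cK \star \cE$ of weight $c$; the disjointness $\varphi_1(A') \cap B' = \varnothing$ together with the Novikov-completeness of the $\Lambda_0^\bG$-module structure on $\Hom(\cE,\cF)$ then forces the latter to vanish.

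The main obstacle, I anticipate, is verifying that steps (ii) and (iii) carry over for the non-compact end-conic objects $\cE$ and $\cF$. One needs the GKS-type sheaf quantization of~\cite{IK} to act sensibly on non-compact objects (which should follow from $H_t$ being compactly supported, so that the kernel is ``the identity at infinity''), and the Tamarkin--Novikov vanishing must apply to the disjoint but non-compact microsupports $\varphi_1(A')$ and $B'$. The end-conic hypothesis should be precisely what prevents pathological behavior at infinity and allows the usual Novikov-shift argument to run unchanged.
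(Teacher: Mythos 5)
Your proposal misidentifies which theorem of Tamarkin-type is being proven, and as a result it is circular. The separation theorem is precisely the statement ``disjoint (non-conic) microsupports imply $\Hom=0$''; in \cite{Tamarkin,GS14} it is proven by a direct cut-off/microsupport argument, not by displacement, and it is an \emph{input} to the displacement-based non-displaceability results, not a consequence of them. In your step (iii) you invoke ``the Tamarkin--Novikov vanishing for the disjoint but non-compact microsupports $\varphi_1(A')$ and $B'$'' --- but that vanishing statement \emph{is} the theorem you are trying to prove, and your displacement step has not made it any easier: $\varphi_1(A')$ and $B'$ are still non-compact, end-conic and disjoint, i.e.\ you are back at the original hypotheses. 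Indeed the whole step (i) is superfluous, since $A'\cap B'=\varnothing$ is already assumed; ``displacing'' sets that are already disjoint (e.g.\ by the identity) gains nothing. The GKS-quantization kernel, the Hofer cost $c$, and the Novikov comparison morphism play no role in the separation theorem and cannot substitute for the missing vanishing argument.

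The actual content of the proof, which your proposal leaves entirely to the phrase ``the end-conic hypothesis should be precisely what prevents pathological behavior at infinity,'' is the following. First, one needs a cut-off result (\cref{lemma:cutoff_pts}, \cref{lemma:cutoff_manifold}, generalizing \cite[Lem.~4.25, Thm.~4.27]{GS14}) showing that for $\cE\in\mu^{\bO}_{A'}(T^*M;u<c)$ the non-conic microsupport controls the full microsupport of the representing sheaf on $M\times\bR_{u<c}\times\bR_t$, including the locus $\{\tau=0\}$, where the end-conic hypothesis enters to confine $\MS(\cE)\cap\{\tau=0\}$ to directions coming from $\partial A'$ (\cref{proposition:ms_representative}). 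Second, one estimates the microsupport of the convolution hom-sheaf $\cHom^\star(\cE_{M\times(-\infty,a)\times\bR_t},\cF)$ using $A'\cap B'=\varnothing$, and uses the compactness of $\pi(A')\cap\pi(B')$ to make the projection $q$ to $\bR_t$ proper on its support; then $q_*\cHom^\star(\cE_{M\times(-\infty,a)\times\bR_t},\cF)$ has microsupport in the zero section and lies in $\Sh_{\{\tau\le 0\}}(\bR_t)^\perp$, hence vanishes, giving $\Hom=0$ in the non-equivariant case (\cref{proposition:sep_nonequiv}); the $\bG$-equivariant statement then follows by the $\star_\bG$-adjunction and passing to $\bG$-invariants. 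None of these steps --- the cut-off at $\tau=0$, the properness coming from compactness of $\pi(A')\cap\pi(B')$, or the equivariant reduction --- appears in your outline, so the proposal as it stands does not constitute a proof strategy for this statement.
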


We prepare some lemmas in \cref{subsec:cutoff} and give a proof in \cref{subsec:proof_separtion}.

\subsection*{Notation}

Throughout the paper, we fix a unital integral commutative ring $\bK$.
Let $X$ be a manifold and let $\pi \colon T^*X \to X$ denote its cotangent bundle.
We write $T^*_XX$ for the zero-section of $T^*X$.
We also let $\partial T^*X$ denote the contact boundary defined as $(T^*X \setminus T^*_XX)/\bR_{>0}$. 

We denote by $\bK_X$ the constant sheaf on $X$ with stalk $\bK$.
A \emph{category} (resp.\ \emph{triangulated category}) means a \emph{dg-category} (resp.\ \emph{pre-triangulated dg-category}) unless specified.
For example, $\Sh(X)$ the derived category of the abelian category of $\bK_X$-modules, is a triangulated category in our sense.
For an object $\cE \in \Sh(X)$, we write $\MS(\cE)$ for the microsupport of $\cE$ (see \cite{KS90} for the definition), which is a closed conic subset of $T^*X$.

\section{Cut-off result}\label{subsec:cutoff}

In this subsection, we prove the following: 
For $\cE \in \mu^\bO(T^*M;u<c)$, $\musupp(\cE) \subset T^*M$ controls the whole microsupport $\MS(\cE) \subset T^*(M \times \bR_{u<c} \times \bR_t)$, where $\bO$ denotes the trivial subgroup of $\bR$ and $\cE$ is regarded as an object of $\Sh(M \times \bR_{u<c} \times \bR_t)$ through the projector $(-)\star \bK_{\geq 0}$.

First, we give a cut-off lemma in the special case.
Let $V$ be a finite-dimensional real vector space and $\gamma$ be a closed convex proper cone in $V$ with $0 \in \gamma$ and $\gamma \neq \{0\}$.
We also set $U_\gamma \coloneqq T^*M \times V \times \Int \gamma^\circ$ and $Z_\gamma \coloneqq T^*M \times T^*V \setminus U_\gamma$ as in \cite{GS14}, where $\gamma^\circ$ denotes the polar cone of $\gamma$: $\{ \theta \in V^* \mid \langle \theta,v \rangle \ge 0 \ \text{for any $v \in \gamma$} \}$.

We recall known cut-off results from \cite{GS14}, which we will use in the following proofs.
\begin{lemma}[{\cite[Prop.~4.17(ii)]{GS14}}]\label{lemma:cutoffGS}
    For $\cF \in \Sh(\bK_V)$, we have
    \begin{equation}
        \MS(\cF \star \bK_{\gamma})\subset (\MS(\cF)\cap U_\gamma)\cup V\times \partial \gamma^\circ.
    \end{equation}
    In particular,
    \begin{equation}
        \MS(\cF \star \bK_{\gamma})\cap U_\gamma\subset (\MS(\cF) \cap U_\gamma).
    \end{equation}
\end{lemma}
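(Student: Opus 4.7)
The plan is to combine two standard inputs from Kashiwara--Schapira microlocal theory: an explicit upper bound for $\MS(\bK_\gamma)$ when $\gamma$ is a closed proper convex cone, and the functorial estimate for microsupport under proper direct image applied to the sum map $s \colon V \times V \to V$ that defines the convolution $\star$.

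First, I would record the computation
\begin{equation*}
    \MS(\bK_\gamma) \subset \{(v,\theta) \in V \times V^* : v \in \gamma,\ \theta \in \gamma^\circ,\ \langle v, \theta\rangle = 0\},
\end{equation*}
which follows from the convexity and properness of $\gamma$ via the Kashiwara--Schapira microsupport calculus (up to the sign convention for $\gamma^\circ$ matching that used for $\star$). The essential consequence for later use is that if $\theta \in \Int \gamma^\circ$, then $\theta$ is strictly positive on $\gamma \setminus \{0\}$, so the orthogonality condition $\langle v, \theta\rangle = 0$ together with $v \in \gamma$ forces $v = 0$.

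Next, because $\gamma$ is a proper cone, the sum map $s \colon V \times V \to V$ is proper on $V \times \gamma$, so $\cF \star \bK_\gamma \cong s_!(\cF \boxtimes \bK_\gamma)$ is well defined, and the Kashiwara--Schapira estimates for proper direct image together with $\MS(\cF \boxtimes \bK_\gamma) \subset \MS(\cF) \times \MS(\bK_\gamma)$ yield
\begin{equation*}
    \MS(\cF \star \bK_\gamma) \subset \{(x,\xi) : \exists\, y \in V \text{ with } (y,\xi) \in \MS(\cF) \text{ and } (x-y,\xi) \in \MS(\bK_\gamma)\}.
\end{equation*}
I would then case-split on $\xi$: if $\xi \in \Int \gamma^\circ$, the observation from step one forces $x - y = 0$, so $(x,\xi) = (y,\xi) \in \MS(\cF)$, and since $\xi \in \Int \gamma^\circ$ this places $(x,\xi)$ in $\MS(\cF) \cap U_\gamma$; otherwise $\xi \in \partial \gamma^\circ$, yielding the second term $V \times \partial \gamma^\circ$. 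The ``in particular'' assertion then follows by intersecting both sides with $U_\gamma$, using $\Int \gamma^\circ \cap \partial \gamma^\circ = \varnothing$.

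The one point requiring genuine care is aligning the sign convention for $\gamma^\circ$ (versus its antipode $-\gamma^\circ$) between the computation of $\MS(\bK_\gamma)$ and the precise definition of the convolution $\star$; once these are matched, each of the three steps above is a direct application of the microsupport calculus of Kashiwara--Schapira, and I expect no further obstacle.
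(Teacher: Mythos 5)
There is a genuine gap at your second step, and it is not a repairable technicality---it is exactly the point of the lemma. The sum map $s \colon V \times V \to V$ is \emph{not} proper on $V \times \gamma$, nor on $\Supp(\cF) \times \gamma$ in general: for a compact $K \subset V$, the set $s^{-1}(K) \cap (V \times \gamma)$ contains $\{(k - v, v) : k \in K,\ v \in \gamma\}$, which is unbounded. ``Properness'' of the cone $\gamma$ only means that $\gamma$ contains no line; what it gives you is properness of $s$ on $\gamma \times \gamma$, not on $V \times \gamma$. (Also, $Rs_!$ is defined without any properness; properness is needed only to invoke the Kashiwara--Schapira direct-image estimate, which is therefore unavailable here.) Worse, the fiberwise bound you want to extract from it is actually false in this setting. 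Take $V = \bR^2$, $\gamma = \{0\} \times [0,\infty)$, and $\cF = \bK_W$ with $W = \{(a, -1/a) : a > 0\}$, a closed curve. Then $\cF \star \bK_\gamma \cong \bK_{W'}$ (extension by zero) with $W' = \{(x_1,x_2) : x_1 > 0,\ x_2 \ge -1/x_1\}$, and near every point $(0,c)$ this sheaf coincides with $\bK_{\{x_1 > 0\}}$, so $\MS(\cF \star \bK_\gamma)$ contains the nonzero covectors $((0,c);(-1,0))$. These admit no decomposition with $(y,\xi) \in \MS(\cF)$ and $(x-y,\xi) \in \MS(\bK_\gamma)$, because $\MS(\bK_W)$ is the conormal bundle of the curve and contains no nonzero covector proportional to $(\pm 1,0)$. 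Such points are of course allowed by the lemma, since $(-1,0) \in \partial\gamma^\circ$: the term $V \times \partial\gamma^\circ$ in the statement is there precisely to absorb contributions ``from infinity'' created by the non-properness of $s$, and it cannot be recovered from $\MS(\bK_\gamma)$ alone as your case split suggests. (Your first step is fine: with the paper's convention for $\gamma^\circ$, one does have $\MS(\bK_\gamma) \subset \{(v,\theta) : v \in \gamma,\ \theta \in \gamma^\circ,\ \langle v,\theta\rangle = 0\}$.)

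Note that the paper itself does not prove this lemma; it quotes it from \cite{GS14} (Prop.~4.17(ii)). The known proofs of such cut-off statements (see \cite{KS90}, \S 3.5 and \S 5.2, and \cite{GS14}, \S 4.3) do not proceed by a naive pushforward estimate: they use the $\gamma$-topology description of $(-) \star \bK_\gamma$ and refined cut-off arguments exactly in order to control the non-proper direct image. If you want to supply a proof rather than a citation, that is the machinery you would need to reproduce; as written, the properness claim and the resulting fiberwise estimate are the missing (and false) ingredients.
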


We slightly generalize this and \cite[Lem.~4.25]{GS14} as follows.

\begin{lemma}\label{lemma:cutoff_pts}
    Let $\cF \in {}^\perp \Sh_{Z_\gamma}(\bK_{V})$ and assume that there exists a closed cone $C^* \subset V^*$ such that 
    \begin{enumerate}
    \renewcommand{\labelenumi}{$\mathrm{(\arabic{enumi})}$}
        \item $C^* \subset \gamma^\circ$ and
        \item $\MS(\cF) \cap U_ \gamma \subset V \times \Int C^*$.
    \end{enumerate}
    Then one has $\MS(\cF) \subset (\MS(\cF) \cap U_ \gamma) \cup (V \times (C^* \cap \partial \gamma^\circ))$.
\end{lemma}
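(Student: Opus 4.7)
My plan is to apply \cref{lemma:cutoffGS} twice — once with the original cone $\gamma$ and a second time with an enlarged cone $\gamma' \supset \gamma$ whose polar equals $C^*$ — and then intersect the two resulting microsupport estimates.

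The first application is immediate: the hypothesis $\cF \in {}^\perp \Sh_{Z_\gamma}(\bK_V)$ translates, by the standard Tamarkin cutoff formalism, to $\cF \simeq \cF \star \bK_\gamma$, so \cref{lemma:cutoffGS} gives
\begin{equation}\label{eq:first-MS}
    \MS(\cF) \subset (\MS(\cF) \cap U_\gamma) \cup V \times \partial \gamma^\circ.
\end{equation}
For the second, set $\gamma' \coloneqq (C^*)^\circ$. Since $C^* \subset \gamma^\circ$, taking polars reverses inclusions to give $\gamma \subset \gamma'$, and the bipolar theorem (applied to $C^*$, which we may assume closed convex) yields $(\gamma')^\circ = C^*$. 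Granting the intermediate claim that $\cF \in {}^\perp \Sh_{Z_{\gamma'}}(\bK_V)$ as well, the same Tamarkin principle gives $\cF \simeq \cF \star \bK_{\gamma'}$, and \cref{lemma:cutoffGS} applied to $\gamma'$ yields
\begin{equation}\label{eq:second-MS}
    \MS(\cF) \subset (\MS(\cF) \cap U_{\gamma'}) \cup V \times \partial C^* \subset V \times C^*.
\end{equation}
Intersecting \eqref{eq:first-MS} with \eqref{eq:second-MS} and using $\MS(\cF) \cap U_\gamma \subset V \times \Int C^* \subset V \times C^*$, one concludes the desired inclusion.

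The heart of the proof is thus the intermediate claim $\cF \in {}^\perp \Sh_{Z_{\gamma'}}$, which must genuinely use both hypotheses (being in ${}^\perp \Sh_{Z_\gamma}$ is a priori weaker than being in ${}^\perp \Sh_{Z_{\gamma'}}$, since $Z_{\gamma'} \supset Z_\gamma$). My plan is to exploit the distinguished triangle $\mathcal{S} \to \bK_{\gamma'} \to \bK_\gamma \to \mathcal{S}[1]$, where $\mathcal{S}$ is the extension by zero of the constant sheaf on the open subset $\gamma' \setminus \gamma \subset \gamma'$. Convolving with $\cF$ and using $\cF \star \bK_\gamma \simeq \cF$ reduces the claim to $\cF \star \mathcal{S} \simeq 0$. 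The microsupport of $\mathcal{S}$ is carried by conormal directions along the boundary of $\gamma' \setminus \gamma$, which (up to sign conventions) lie in $\gamma^\circ \setminus \Int C^*$. The hypothesis $\MS(\cF) \cap U_\gamma \subset V \times \Int C^*$ says precisely that $\MS(\cF)$ avoids these directions inside $U_\gamma$, while outside $U_\gamma$ the orthogonality to $\Sh_{Z_\gamma}$ handles the rest; via the Kashiwara–Schapira estimate for convolution, this forces $\cF \star \mathcal{S} \simeq 0$. The bulk of the technical work lies in making the direction analysis rigorous, particularly near the overlap $\partial \gamma^\circ \cap C^*$.
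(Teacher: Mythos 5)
There is a genuine gap, in fact two. First, the reduction ``we may assume $C^*$ closed convex'' is not available: the conclusion of the lemma refers to $C^*$ itself, so replacing $C^*$ by $\Conv(C^*)$ only proves the strictly weaker estimate $\MS(\cF) \subset (\MS(\cF)\cap U_\gamma)\cup V\times(\Conv(C^*)\cap\partial\gamma^\circ)$. The non-convex case is not a technicality here: in \cref{lemma:cutoff_manifold} and \cref{proposition:ms_representative} the lemma is applied with $C^*$ a union of fiberwise cones (e.g.\ $C^*_K=\bigcup_{(x',u')\in K}C^*_{(x',u')}$), and passing to convex hulls would ruin the disjointness of $\partial A'$ and $\partial B'$ that the separation argument needs. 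The paper's proof devotes its entire second half to exactly this point: after the $\beta=\{0\}$ case gives the $\Conv(C^*)$-estimate, one rules out covectors $\xi\in\Conv(C^*)\setminus C^*$ by rerunning the claim with an auxiliary proper closed convex cone $\beta$ chosen so that $\xi\in\Int\beta^\circ$ and $C^*\cap\beta^\circ=\{0\}$. Nothing in your proposal addresses this step.

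Second, even in the convex case, the ``intermediate claim'' $\cF\in{}^\perp\Sh_{Z_{\gamma'}}$, i.e.\ $\cF\star\bK_{\gamma'\setminus\gamma}\simeq 0$, is where all the content of the lemma sits, and your justification is not a proof. A microsupport estimate for a convolution can never by itself ``force'' vanishing; the correct mechanism (which is what the paper does, in the guise of the isomorphism in $\Sh(\bK_V;U_\gamma\cap U_\beta)$ followed by the projector equivalence) is to show $\MS(\cF\star\bK_{\gamma'\setminus\gamma})\subset V\times\partial\gamma^\circ\subset Z_\gamma$ and then use that $\cF\star\bK_{\gamma'\setminus\gamma}$ automatically lies in ${}^\perp\Sh_{Z_\gamma}$, together with ${}^\perp\Sh_{Z_\gamma}\cap\Sh_{Z_\gamma}=0$. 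Moreover, establishing that microsupport estimate is itself delicate: the kernel $\bK_{\gamma'\setminus\gamma}$ has unbounded support, so the naive covector-matching estimate for $\star$ does not apply without the non-proper cut-off results of \cite[Prop.~4.17]{GS14}, and at the vertex the conormal cone of $\gamma'\setminus\gamma$ contains covectors in $\Int\gamma^\circ$ outside $C^*$, so the claimed inclusion of $\MS(\bK_{\gamma'\setminus\gamma})$-directions in $\gamma^\circ\setminus\Int C^*$ needs genuine care. This is precisely the computation the paper carries out for $\MS(\cG\star\bK_{\lambda\setminus\gamma})$ (your $\gamma'$ is the paper's $\lambda$ for $\beta=\{0\}$), so your plan is essentially the special case $\beta=\{0\}$ of the paper's argument with its two hardest steps — the estimate forcing the isomorphism $\cF\star\bK_{\gamma'}\simeq\cF$, and the sharpening from $\Conv(C^*)$ to $C^*$ — left unproved. (A minor point to also record: applying \cref{lemma:cutoffGS} to $\gamma'=(C^*)^\circ$ requires $\gamma'$ to be proper, which needs $C^*$ not contained in a hyperplane; the degenerate case $\Int C^*=\varnothing$ must be treated separately, though it is easy since then $\MS(\cF)\cap U_\gamma=\varnothing$.)
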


\begin{proof}
    We mimic the proof of \cite[Lem.~4.25]{GS14}.
    We first claim the following.

    \begin{claim}
        For any proper closed convex cone $\beta$ of $V$, we have 
    \begin{equation}
        \MS(\cF) \cap U_\beta 
        \subset \lb (\MS(\cF) \cap U_\gamma) \cup V \times (\Conv(C^* \cap \beta^\circ) \cap \partial \gamma^\circ) \rb \cap U_\beta.
    \end{equation}
    \end{claim}

    \begin{proof}
    Let us consider the sheaf $\cG = \cF \star \bK_\beta$. 
    By \cref{lemma:cutoffGS}, we have 
    \begin{equation}
        \MS(\cG) \cap U_\beta = \MS(\cF) \cap U_\beta,
    \end{equation}
    which implies 
    \begin{equation}
        \MS(\cG) \cap U_\gamma \cap U_\beta 
        \subset 
        V \times \Int C^* \cap U_\beta
    \end{equation}
    by the condition~(2).
    
    Set $\lambda \coloneqq (C^* \cap \beta^\circ)^\circ \subset V$. 
    We shall show
        \begin{equation}\label{eq:cutoff2}
        \MS(\cG \star \bK_\lambda) \cap U_\beta
        \subset \lb (\MS(\cG) \cap U_\gamma) \cup V \times (\Conv(C^* \cap \beta^\circ) \cap \partial \gamma^\circ) \rb \cap U_\beta
    \end{equation}
    and
    \begin{equation}\label{eq:cutoff1}
        \cG \cong \cG \star \bK_{\lambda}.
    \end{equation}    

    Since $\cG \in {}^\perp \Sh_{Z_\gamma}(\bK_V)$, by \cref{lemma:cutoffGS}, we have $\MS(\cG)=\MS(\cG \star \bK_\gamma) \subset V \times \gamma^\circ$ and
    \begin{equation}
    \begin{aligned}
        \MS(\cG) \cap U_\beta
        & = \MS(\cG \star \bK_\gamma) \cap U_\beta\\
        & \subset 
        ((\MS(\cG) \cap U_\gamma) \cup (V \times \partial \gamma^\circ)) \cap U_\beta\\
        & \subset 
        V \times ((\Int \lambda^\circ \cup \partial \gamma^\circ) \cap \Int \beta^\circ ).
    \end{aligned}
    \end{equation}
    In particular, we have $\MS(\cG) \subset V \times (\Int \lambda^\circ \cup \partial \gamma^\circ \cup \partial \beta^\circ )$ by \cref{lemma:cutoffGS}.
    Since $\cG \star \bK_\lambda \cong (\cG \star \bK_\gamma) \star \bK_\lambda$, by the microsupport estimate for $\star$, we have 
    \begin{equation}
    \begin{aligned}
        \MS(\cG \star \bK_\lambda) \cap U_\beta 
        & \subset 
        V \times (\lambda^\circ \cap (\Int \lambda^\circ \cup \partial \gamma^\circ \cup \partial \beta^\circ ) \cap \Int \beta^\circ ) \\
        & \subset 
        V \times (\Int \lambda^\circ \cup (\lambda^\circ \cap \partial \gamma^\circ) \cap \Int \beta^\circ) \\
        & \subset 
        (U_\gamma \cup (V \times (\Conv(C^* \cap \beta^\circ) \cap \partial \gamma^\circ))) \cap U_\beta,
    \end{aligned}   
    \end{equation}
    which proves \eqref{eq:cutoff2}.    
    Moreover, since 
    \begin{equation}
    \begin{aligned}
        \MS(\cG \star \bK_{\lambda \setminus \gamma}) \cap U_\beta
        & \subset V \times ((\gamma^\circ \setminus \Int \lambda^\circ) \cap (\Int \lambda^\circ \cup \partial \gamma^\circ \cup \partial \beta^\circ) \cap \Int \beta^\circ) \\
        & \subset V\times (\partial \gamma^\circ\cap \Int \beta^\circ)\subset Z_\gamma\cap U_\beta.
    \end{aligned}
    \end{equation}
    
    From the exact triangle 
    \begin{equation}
        \cG \star \bK_{\lambda \setminus \gamma} \to \cG \star \bK_\lambda \to \cG \star \bK_\gamma \xrightarrow{+1},
    \end{equation}
    we find that $\cG \star \bK_\lambda \to \cG \star \bK_\gamma$ is an isomorphism in $\Sh(\bK_V;U_\gamma \cap U_\beta)=\Sh(\bK_V;U_{(\gamma^\circ\cap \beta^\circ)^\circ})$ by the above microsupport estimate. 
    Since $\cG = \cG \star \bK_\gamma \star \bK_\beta$, we have $\MS(\cG)\subset V\times (\gamma^\circ\cap \beta^\circ)$. 
    Hence we have $\cG \star \bK_{(\gamma^\circ\cap \beta^\circ)^\circ}\cong \cG$. 
    Through the equivalence $ (-)  \star \bK_{(\gamma^\circ\cap \beta^\circ)^\circ} \colon \Sh(\bK_V;U_{(\gamma^\circ\cap \beta^\circ)^\circ})\rightarrow {}^\perp\Sh_{Z_{(\gamma^\circ\cap \beta^\circ)^\circ}}(\bK_V)$, we have an isomorphism $\cG \star \bK_\lambda\rightarrow \cG$ in $\Sh(\bK_V)$. 
    Hence, we obtain \eqref{eq:cutoff1}, which proves the claim.
    \end{proof}

    Let us return to the proof of \cref{lemma:cutoff_pts}. 
    If we set $\beta=\{0\}$ in the claim, we get 
    \begin{equation}
        \MS(\cF) 
        \subset  (\MS(\cF) \cap U_\gamma) \cup V \times (\Conv(C^*) \cap \partial \gamma^\circ).
    \end{equation}
    Suppose $(x,\xi) \in \MS(\cF) \cap V \times ((\Conv(C^*) \setminus C^*) \cap \partial \gamma^\circ)$. 
    Then, we can find a proper closed convex cone $\beta$ such that $\xi \in \Int \beta^\circ$ and $C^* \cap \beta^\circ=\{0\}$.
    By applying the claim, we get 
    \begin{equation}
        (x,\xi) \in \MS(\cF) \cap V \times \Int \beta^\circ 
        \subset \lb (\MS(\cF) \cap U_\gamma) \cup V \times 0 \rb \cap V \times \Int \beta^\circ,
    \end{equation}
    which is a contradiction. 
    This proves \cref{lemma:cutoff_pts}.
\end{proof}

\begin{remark}
    In the above lemma, if $C^*$ is a strict $\gamma$-cone in the sense of \cite{GS14}, then $C^* \cap \partial \gamma^\circ=\{0\}$. 
    Hence, the lemma generalizes \cite[Lem.~4.25]{GS14}.
\end{remark}

\begin{proposition}\label{lemma:cutoff_manifold}
    Let $M$ be an open subset of $E=\bR^d$ and $\cF \in {}^\perp \Sh_{Z_\gamma}(\bK_{M \times V})$.
    Assume that there exists a closed cone $C^* \subset E^* \times V^*$ such that
    \begin{enumerate}
    \renewcommand{\labelenumi}{$\mathrm{(\arabic{enumi})}$}
        \item $C^* \subset E^* \times \gamma^\circ$ and
        \item $\MS(\cF) \cap U_ \gamma \subset (M \times V) \times C^*$.
    \end{enumerate}
    Then one has $\MS(\cF) \subset (\MS(\cF) \cap U_ \gamma) \cup (M \times V) \times (C^* \cap (E^* \times \partial \gamma^\circ))$.
\end{proposition}

\begin{proof}
    The proof is similar to \cite[Thm.~4.27]{GS14}.
    We replace \cite[Lem.~4.25]{GS14} with \cref{lemma:cutoff_pts}.
    The statement is local on $M$.
    Let $x_0 \in M$ and $K$ be a compact neighborhood of $x_0$. 
    We choose an open neighborhood $W$ of $K$ and a diffeomorphism $W \cong \bR^d=E$ satisfying $(-1,1)^d \subset K$.
    We take a diffeomorphism $\varphi \colon (-1,1) \simto \bR$ such that $d\varphi(t) \ge 1$ for any $t \in (-1,1)$.
    Define $\Phi \colon U \coloneqq (-1,1)^d \times V \simto E \times V$ by 
    \begin{equation}
        \Phi(x'_1,\dots,x'_d, x'') \coloneqq (\varphi(x'_1),\dots,\varphi(x'_d),x''). 
    \end{equation}
    Then, we obtain $\Phi_\pi \Phi_d^{-1}(U \times C^*) \subset E \times C^*$ (see \cite[Lem.~4.26]{GS14}).
    We can apply \cref{lemma:cutoff_pts} to the sheaf $\Phi_*(\cF|_U)$ on the vector space $E \times V$ with the cone $\{0\} \times \gamma$ and any neighborhood closed cone $\widetilde C^*$ of $C^*$ in $E^*\times\gamma^\circ$. We then obtain the estimate $\MS(\cF) \subset (\MS(\cF) \cap U_ \gamma) \cup (M \times V) \times (\widetilde C^* \cap (E^* \times \partial \gamma^\circ))$. By taking the intersections over such $\widetilde C^*$'s, we obtain the desired estimate.
\end{proof}

For an end-conic closed subset $A'$ of $T^*M$, we set $\partial A' \coloneqq A' \cap \partial T^*M \subset \partial T^*M$. 
Set 
\begin{equation}
    \rho \colon T^*M \times T^*_{\tau >0}\bR_t \to T^*M; (p,t,\tau) \mapsto p/\tau.
\end{equation}

\begin{proposition}\label{proposition:ms_representative}
    Let $A'$ be an end-conic closed subset of $T^*M$ and set $A=\rho^{-1}(A')$. 
    Then for any $\cE \in \mu^{\bO}_{A'}(T^*M;u<c)$, one has 
    \begin{equation}
        \MS(\cE|_{u>0}) \subset AA \cup \{ (x,a\xi,u,0,t,0) \mid (x,\xi) \in \partial A',a>0 \} \cup T^*_{M \times \bR_{u<c} \times \bR_t}(M \times \bR_{u<c} \times \bR_t),
    \end{equation}
    where $AA \subset T^*M \times T^*\bR_{u<c} \times T^*_{\tau>0}\bR_t$ denotes the doubling of $A$ (see \cref{eq:doubling}).
\end{proposition}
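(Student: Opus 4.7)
The plan is to split $\MS(\cE|_{u > 0})$ along the hypersurface $\{\tau = 0\}$: control the $\{\tau > 0\}$-part directly from the hypothesis on $\musupp(\cE)$, and then propagate the control across the boundary via \cref{lemma:cutoff_manifold}.

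First, on the open region $\{\tau > 0\}$, the inclusion $\MS(\cE|_{u > 0}) \cap \{\tau > 0\} \subset AA$ is essentially the definition of $\musupp(\cE) \subset A'$ combined with the doubling construction of \cite[Def.~3.1]{IK}: the projection $\rho$ identifies the $\{\tau > 0\}$-part of $\MS(\cE)$ with $\musupp(\cE)$, so membership in $AA$ is equivalent to the given non-conic microsupport condition. This takes care of the first term of the target inclusion.

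Second, to propagate across $\tau = 0$, I apply \cref{lemma:cutoff_manifold} with base $M \times \bR_{u < c}$, fiber $V = \bR_t$, and cone $\gamma = \bR_{\ge 0}$, so that $\gamma^\circ = \bR_{\ge 0}$ and $U_\gamma = \{\tau > 0\}$. Since $\cE \cong \cE \star \bK_{\ge 0}$ by definition of the projector, $\cE|_{u > 0}$ lies in ${}^\perp \Sh_{Z_\gamma}$, so the ambient hypothesis of the lemma is satisfied. The conclusion is local on $M$, so I fix a relatively compact coordinate open $W \subset M$ with trivialization $T^*W \cong W \times E^*$ and construct a closed cone $C^* \subset E^* \times \bR_u^* \times \bR_t^*$ whose associated constant bundle $(W \times \bR_{u > 0} \times \bR_t) \times C^*$ contains the local part of $AA$ over this chart. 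Feeding this into the lemma yields
\begin{equation}
    \MS(\cE|_{u > 0})|_{W \times \bR_{u > 0} \times \bR_t} \subset AA \cup (W \times \bR_{u > 0} \times \bR_t) \times (C^* \cap \{\tau = 0\}),
\end{equation}
which matches the claim provided one identifies $C^* \cap \{\tau = 0\}$ with the prescribed $\partial A'$-contribution together with the zero section.

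The principal obstacle is the construction of $C^*$ and the identification of its $\{\tau = 0\}$-trace, which is where the end-conic hypothesis on $A'$ is indispensable. Decomposing $A'|_W = (A' \cap D^*M)|_W \cup (A' \setminus D^*M)|_W$ into a bounded and a conic piece, the doubling sends each to a subset of $E^* \times \bR_u^* \times \bR_t^*$ whose closed conic hull I take as $C^*$; the bounded piece only limits to the zero section at $\tau = 0$, while the conic piece limits exactly to $\{(a\xi, 0, 0) : (x, \xi) \in \partial A'|_W, a > 0\}$. The inclusion $C^* \subset E^* \times \gamma^\circ$ is automatic because $AA$ itself lies in $\{\tau \ge 0\}$. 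A remaining subtlety is to verify that the $\eta_u$-component of any $\tau = 0$ limit vanishes, which should follow from the fact that the doubling construction of \cite{IK} treats $u$ as an inert parameter, so that $\eta_u$-directions in $AA$ only appear coupled with nonzero $\tau$ and must therefore disappear in the $\tau \to 0$ limit.
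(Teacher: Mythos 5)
Your proposal follows essentially the same route as the paper: localize to a coordinate chart, apply \cref{lemma:cutoff_manifold} with $V = \bR_t$ and $\gamma = \bR_{\ge 0}$, build a closed cone $C^*$ from the (closures of the) fibers of $AA$ over the chart, and read off the $\{\tau=0\}$-trace of $C^*$ using end-conicity. Your two side observations are also correct and consistent with the paper: condition $(1)$ of \cref{lemma:cutoff_manifold} is automatic since $AA$ sits in $\{\tau>0\}$, and the $\upsilon$-component must vanish in the $\tau \to 0$ limit, which is exactly what is encoded in the paper's definition of $C^{\infty,*}_{(x',u')}$ (it has $\upsilon = 0$).

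There is one genuine gap. Your estimate is stated at the chart level: the trace $C^* \cap \{\tau=0\}$ that you construct aggregates $\partial A'$ over the \emph{entire} chart $W$. Thus the bound you obtain on $\MS(\cE|_{u>0})$ at a fixed fiber over $(x,u,t)$ permits covectors $(a\xi,0,0)$ with $(x',\xi) \in \partial A'$ for \emph{any} $x' \in W$, not only for $x' = x$. This is strictly weaker than the proposition's conclusion, which requires $(x,\xi) \in \partial A'$ at the same base point $x$. The paper closes exactly this gap: it runs the cut-off argument over a compact coordinate neighborhood $K$ of the given point (so that $C^*_K$ is a single closed cone and \cref{lemma:cutoff_manifold} applies) and then takes the intersection of the resulting estimates over all such $K$ shrinking to $(x,u)$, producing the pointwise bound. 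You need to add this final shrinking-and-intersecting step to finish.
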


\begin{proof}
    Fix $(x,u)\in M\times \bR_{u<c}$ and take a coordinate compact neighborhood $K$.
    Then the cotangent bundle is trivialized over $K$, namely, $T^*(M\times \bR_{u<c})|_K \cong K \times V^*$ with $V=\bR^{n+1}$. 
    Set $A=\rho^{-1}(A')$.
    For each $(x',u') \in K$, we set
    \begin{equation}
        C_{(x',u')}^* \coloneqq \overline{T^*_{(x',u')}(M\times \bR_{u<c}) \times \bR_\tau\cap AA} \subset V^* \times \bR_\tau.
    \end{equation}
    We also define
    \begin{equation}
    \begin{aligned}
        C_K^* & \coloneqq \bigcup_{(x',u')\in K}C_{(x',u')}^* \subset V^* \times \bR_\tau, \\
        C^{\infty,*}_{(x',u')} & \coloneqq \bR_{> 0}\cdot (\partial A'\cap \partial T^*_{x'} M) \times \{ \upsilon=0\} \times \{\tau=0\} \cup \{(0,0)\} \subset V^* \times \bR_\tau. 
    \end{aligned}
    \end{equation}
    Since $A'$ is end-conic, we find that $C_K^* \cap \{\tau=0 \} \subset \bigcup_{(x',u') \in K}C^{\infty,*}_{(x',u')}$.

    We take an open subset $U \subset K$ containing $(x,u)$ and set $\cE_{U}\coloneqq \cE|_{U \times \bR_t}$. 
    Then we have $\SS(\cE_{U}) \cap \{\tau>0\} \subset (U \times \bR_t) \times C_K^*$. 
    Hence, by applying \cref{lemma:cutoff_manifold} to $\cE_{U}$ with $V=\bR_t$ and $\gamma=\bR_{\ge 0}$, we obtain
    \begin{equation}
        \SS(\cE_{U}) \subset (\SS(\cE_{U}) \cap U_\gamma) \cup ((U\times \bR_t) \times (C_K^* \cap \{ \tau =0\})).
    \end{equation}
    Here, $\SS(\cE_{U}) \cap U_\gamma \subset AA$ and $C_K^* \cap \{\tau=0\} \subset \bigcup_{(x',u') \in K}C^{\infty,*}_{(x',u')}$.
    In particular, 
    \begin{equation}
        \SS(\cE)\times_{\lc(x,u)\rc \times \bR_t}T^*(M\times \bR^2) \subset AA \cup  (U\times \bR_t) \times \bigcup_{(x',u') \in K}C^{\infty,*}_{(x',u')}
    \end{equation}
    for any coordinate compact neighborhood $K$ of $(x,u)$. 
    By taking the intersection over the neighborhoods of $x$, we get the desired estimate over $x$. 
    This completes the proof.
\end{proof}

\section{Proof of separation theorem}\label{subsec:proof_separtion}

We first prove the separation theorem in the non-equivariant case.

\begin{proposition}\label{proposition:sep_nonequiv}
    Let $A'$ and $B'$ be end-conic closed subsets $T^*M$.
    Suppose that $\pi(A') \cap \pi(B')$ is compact and $A' \cap B' = \varnothing$.
    Then for any $\cE \in \mu^{\bO}_{A'}(T^*M;u<c)$ and any $\cF \in \mu^{\bO}_{B'}(T^*M;u<c)$, one has $\Hom_{\mu^{\bO}(T^*M;u<c)}(\cE,\cF)=0$.
\end{proposition}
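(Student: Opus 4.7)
The plan is to reduce the statement to the classical compactly supported Tamarkin separation theorem of \cite{Tamarkin, GS14} via \cref{proposition:ms_representative}, by observing that the relevant internal hom is supported over the compact set $\pi(A') \cap \pi(B')$.

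First, I represent $\cE$ and $\cF$ as sheaves on $N \coloneqq M \times \bR_{u<c} \times \bR_t$ fixed by the projector $(-) \star \bK_{\geq 0}$, so that any morphism in $\mu^\bO(T^*M;u<c)$ is represented by a genuine sheaf morphism after restriction to the region $\{u>0\}$. By \cref{proposition:ms_representative}, the (sheaf-theoretic) support of $\cE|_{u>0}$, resp.\ $\cF|_{u>0}$, projects into $\pi(A')$, resp.\ $\pi(B')$, in the $M$-direction, so the internal hom complex $\mathcal{H}om(\cE|_{u>0},\cF|_{u>0})$ is supported over $\pi(A')\cap \pi(B')$, which is compact by hypothesis. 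Consequently, after choosing a relatively compact open neighborhood $U\subset M$ of this intersection, one may replace $\cE$ and $\cF$ by their restrictions to $U \times \bR_{u<c} \times \bR_t$ without changing $\Hom(\cE,\cF)$.

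Over $U$, the sets $A'_U \coloneqq A' \cap \pi^{-1}(\bar U)$ and $B'_U \coloneqq B' \cap \pi^{-1}(\bar U)$ are closed and disjoint with compact base projection, and by end-conicity their traces on the contact boundary, $\partial A'_U$ and $\partial B'_U$, are disjoint in $\partial T^*M|_{\bar U}$. Applying \cref{proposition:ms_representative} to both sheaves yields microsupport estimates in which the doubling pieces $AA$ and $BB$ are disjoint (since $A'\cap B'=\varnothing$ forces $A \cap B = \varnothing$) and the end-conic contributions at $\tau=0$ are disjoint as well (since $\partial A'\cap \partial B'=\varnothing$). The classical Tamarkin argument---pushing $\cF$ in the positive $t$-direction and exploiting the disjointness of microsupports in $\{\tau>0\}$---then delivers the desired vanishing.

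The principal technical difficulty is the end-conic contribution at $\tau=0$, which the Tamarkin projector (concentrated on $\{\tau>0\}$) does not directly suppress. I expect to eliminate this piece by exploiting $\partial A'_U \cap \partial B'_U=\varnothing$ in $\partial T^*M|_{\bar U}$ via a further cutoff in the cotangent fiber direction, in the spirit of \cref{lemma:cutoff_manifold} applied to suitable cones separating $\partial A'_U$ from $\partial B'_U$ over the compact base $\bar U$; once this obstruction is removed, what remains is the standard disjoint-microsupport vanishing for Tamarkin sheaves over a compactly based neighborhood.
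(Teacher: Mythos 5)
Your overall route --- first localize everything over a compact subset of $M$ and then invoke the compactly supported separation theorem --- is genuinely different from the paper's proof, but its key reduction has a real gap. \cref{proposition:ms_representative} is a microsupport estimate whose right-hand side contains the whole zero section, so it gives no control on supports; and in fact an object with $\musupp(\cE)\subset A'$ need not have support lying over $\pi(A')$: at best (using in addition that a locally constant object of ${}^\perp\Sh_{\{\tau\le 0\}}$ vanishes) one gets support over the closure $\overline{\pi(A')}$, and the support can genuinely meet $\overline{\pi(A')}\setminus\pi(A')$. For instance, the projector image of $\bK_{\{x>0,\ t\ge -1/x\}}$ has $\musupp$ equal to the graph of $-x^{-2}\,dx$ over $\{x>0\}$, while its support contains $\{x=0\}\times\bR_t$. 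This matters because the hypothesis is compactness of $\pi(A')\cap\pi(B')$, not of $\overline{\pi(A')}\cap\overline{\pi(B')}$: with $M=\bR^2_{x,y}$, the sets $A'=\{(x,y,\xi,0)\mid x>0,\ \xi\le -x^{-2}\}$ and $B'=\{(x,y,\xi,0)\mid x<0,\ \xi\ge x^{-2}\}$ are closed, end-conic, disjoint, with $\pi(A')\cap\pi(B')=\varnothing$, yet $\overline{\pi(A')}\cap\overline{\pi(B')}=\{x=0\}\times\bR_y$ is noncompact, and the objects built from $\bK_{\{x>0,\ t\ge-1/x\}}$ and $\bK_{\{x<0,\ t\ge 1/x\}}$ have $\musupp$ in $A'$, resp.\ $B'$, while both supports contain $\{x=0\}\times\bR_y\times\bR_t$. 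So $\cHom(\cE,\cF)$ is not known to be supported over a compact subset of $M$, the replacement of $\cE,\cF$ by their restrictions to $U\times\bR_{u<c}\times\bR_t$ is unjustified, and showing that nothing contributes over $\overline{\pi(A')}\cap\overline{\pi(B')}\setminus(\pi(A')\cap\pi(B'))$ is essentially the content of the theorem, so it cannot be absorbed into the reduction.

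Even granting a compact base, the endgame is stated as an expectation rather than an argument, and it is not how the paper proceeds: the $\tau=0$ (fiber-infinity) part of $\MS(\cE)$ is not eliminated by any cutoff --- it is genuinely present whenever $A'$ is noncompact. The paper keeps it and argues as follows: truncate in the $u$-variable ($\cE_{M\times(-\infty,a)\times\bR_t}$, with $\cE|_{u\le 0}\simeq 0$ and a limit over $a\nearrow c$ at the end), form $\cHom^\star(\cE_{M\times(-\infty,a)\times\bR_t},\cF)$, apply \cref{proposition:ms_representative} to \emph{both} arguments to see that this convolution-hom has microsupport meeting $T^*_{M\times\bR_{u<c}}(M\times\bR_{u<c})\times T^*\bR_t$ only in the zero section, and then push forward along $q$ to $\bR_t$; the properness of $q$ on the support of the convolution-hom is exactly where the compactness of $\pi(A')\cap\pi(B')$ enters, and the vanishing follows because the pushforward lies in $\Sh_{\{\tau\le 0\}}(\bR_t)^\perp$ and has microsupport in the zero section. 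None of these ingredients (the $u$-truncation and colimit, the $\cHom^\star$ estimate, the properness argument, the orthogonality step) appears in your sketch. Finally, your intermediate claim that $\partial A'$ and $\partial B'$ are automatically disjoint needs justification: if $\partial(-)$ is taken as the closure at fiber infinity, which is what the microsupport estimates see, two disjoint end-conic sets can share a limit direction over a common limit base point, so this disjointness is an input to be verified rather than a formal consequence of $A'\cap B'=\varnothing$ and end-conicity.
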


\begin{proof}  
    We will prove that $\Hom_{\Sh(M \times \bR_{u<c} \times \bR_t)}(\cE_{M \times (-\infty,a) \times \bR_t},\cF)=0$ for $a>0$. 
    If the claim holds,
    since $\colim_{a \nearrow c} \cE_{M\times (-\infty, a)\times \bR_t}=\cE_{M\times \bR_{u<c}\times \bR_t}$, we have
    \begin{equation}
        \Hom_{\Sh(M \times \bR_{u<c} \times \bR_t)}(\cE_{M \times \bR_{u<c} \times \bR_t},\cF)\cong \lim_{a \nearrow c}\Hom_{\Sh(M \times \bR_{u<c} \times \bR_t)}(\cE_{M \times (-\infty,a) \times \bR_t},\cF)\cong 0.
    \end{equation}
    Hence, we will check the claim below.

    We argue similarly to \cite[Lem.~5.15]{AISQ} to estimate $\MS(\cE_{M \times (-\infty,a) \times \bR_t})$.
    Since $\MS(\cE|_{u>0}) \cap \{ \tau=0\} \subset \{ \upsilon=0\}$ and $N^*(M \times (0,a) \times \bR_t) \subset \{ \xi=0, \tau =0 \}$, we find that $\MS(\cE) \cap N^*(M \times (0,a) \times \bR_t) \subset T^*_{M \times \bR_{u<c} \times \bR_t}(M \times \bR_{u<c} \times \bR_t)$. 
    Noticing that $\cE|_{u \leq 0} \cong 0$ and applying \cite[Prop.~5.4.8]{KS90}, we get 
    \begin{equation}
        \MS(\cE_{M \times (-\infty,a) \times \bR_t}) \subset N^*(M \times (0,a) \times \bR_t)^a + \MS(\cE).
    \end{equation}
    By \cref{proposition:ms_representative}, we have 
    \begin{equation}
        \MS(\cE_{M \times (-\infty,a) \times \bR_t}) \cap \{ u=a\}
        \subset  
        \begin{aligned}
            & \{ (x,\xi,a,\upsilon,t,\tau) \mid (x,\xi,t-a,\tau) \in A, \upsilon \ge -\tau \} \\
            & \cup \{ (x,\xi,a,\upsilon,t,0) \mid (x,\xi) \in \partial A', \upsilon \ge 0 \}, 
        \end{aligned}
    \end{equation}
    where $A=\rho^{-1}(A')$.
    Since $\cE_{M \times (-\infty,a) \times \bR_t} \cong \bK_{M \times \bR_{u<c} \times [0,\infty)} \star \cE_{M \times (-\infty,a) \times \bR_t}$, we have isomorphisms 
    \begin{align}
        & \Hom_{\Sh(M \times \bR_{u<c} \times \bR_t)}(\cE_{M \times (-\infty,a) \times \bR_t}, \cF) \\
        \cong {} & \Hom_{\Sh(M \times \bR_{u<c} \times \bR_t)}(\bK_{M \times \bR_{u<c} \times [0,\infty)} \star \cE_{M \times (-\infty,a) \times \bR_t}, \cF) \\
        \cong {} & \Hom_{\Sh(M \times \bR_{u<c} \times \bR_t)}(\bK_{M \times \bR_{u<c} \times [0,\infty)}, \cHom^\star(\cE_{M \times (-\infty,a) \times \bR_t}, \cF)).
    \end{align}
    Here, $\cHom^\star$ is the right adjoint of $\star$.
    By \cite[Lem.~4.10]{GS14}, $\cHom^\star$ can be written as 
    \begin{equation}
        \cHom^\star(\cG,\cH) 
        \cong 
        m_* \cHom(p_2^{-1}i^{-1}\cG, p_1^!\cH),
    \end{equation}
    where 
    \begin{equation}
    \begin{aligned}
        & p_i \colon M \times \bR_{u<c} \times \bR^2 \to M \times \bR_{u<c} \times \bR_t; \quad (x,u,t_1,t_2) \mapsto (x,u,t_i), \\
        & m \colon M \times \bR_{u<c} \times \bR^2 \to M \times \bR_{u<c} \times \bR_t; \quad (x,u,t_1,t_2) \mapsto (x,u,t_1+t_2), \\
        & i \colon M \times \bR_{u<c} \times \bR_t \to M \times \bR_{u<c} \times \bR_t; \quad (x,u,t) \mapsto (x,u,-t).
    \end{aligned}
    \end{equation}
    By adjunction, we find that $q_* \cHom^\star(\cE_{M \times (-\infty,a) \times \bR_t}, \cF) \in \Sh_{\{\tau \leq 0\}}(\bR_t)^\perp$, where $q \colon M \times \bR_{u<c} \times \bR_t\rightarrow \bR_t$ is the projection. 
    Since $\MS(i^{-1}\cE_{M \times (-\infty,a) \times \bR_t}) \cap \MS(\cF) \subset T^*_{M \times \bR_{u<c} \times \bR_t}(M \times \bR_{u<c} \times \bR_t)$, 
    by the microsupport estimate, we have 
    \begin{equation}
    \begin{aligned}
        & \SS(\cHom^\star(\cE_{M \times (-\infty,a) \times \bR_t}, \cF)) \\
        \subset {} & 
        \left\{ 
            (x,\xi,u,\upsilon,t,\tau) \; \middle| \;
            \begin{aligned}
                & \text{there exist } (\xi_1,\upsilon_1), (\xi_2,\upsilon_2) \in T^*_{(x,u)}(M \times \bR_{u<c}) \\
                & \text{and } t_1, t_2 \in \bR_t \text{ with } t=t_1+t_2 \text{ such that} \\
                & (x,\xi_1,u,\upsilon_1,t_1,\tau) \in \SS(\cE_{M \times (-\infty,a) \times \bR_t}), \\ 
                & (x,\xi_2,u,\upsilon_1,t_2,\tau) \in \SS(\cF), \\
                & \xi = -\xi_1+\xi_2, \text{ and } \upsilon=-\upsilon_1+\upsilon_2
            \end{aligned}
        \right\}.
    \end{aligned}
    \end{equation}
    Again by \cref{proposition:ms_representative}, we obtain 
    \begin{equation}\label{eq:separation_estimate}
    \begin{aligned}
        \SS(\cHom^\star(\cE_{M \times (-\infty,a) \times \bR_t}, \cF)) \cap {} & (T^*_{M \times \bR_{u<c}}(M \times \bR_{u<c}) \times T^*\bR_t) \\
        & \subset T^*_{M \times \bR_{u<c} \times \bR_t}(M \times \bR_{u<c} \times \bR_t).
    \end{aligned}    
    \end{equation}
    Since the map $q$ is proper on $\Supp(\cHom^\star(\cE_{M \times (-\infty,a) \times \bR_t}, \cF))$, we get 
    \begin{equation}
        \MS(q_* \cHom^\star(\cE_{M \times (-\infty,a) \times \bR_t}, \cF)) \subset 0_{\bR_t}.
    \end{equation}
    By combining this estimate with the fact that it is in $\Sh_{\{\tau \leq 0\}}(\bR_t)^\perp$, we find that $q_* \cHom^\star(\cE_{M \times (-\infty,a) \times \bR_t}, \cF) \cong 0$. 
    This implies 
    \begin{equation}
    \begin{aligned}
        & \Hom_{\Sh(M \times \bR_{u<c} \times \bR_t)}(\cE_{M \times (-\infty,a) \times \bR_t}, \cF) \\
        \cong {} &  
        \Hom_{\Sh(\bR_t)}(\bK_{[0,\infty)}, q_* \cHom^\star(\cE_{M \times (-\infty,a) \times \bR_t}, \cF)) \cong 0
    \end{aligned}
    \end{equation}
    as desired.
\end{proof}

Finally, we prove the separation theorem for our equivariant microlocal category. For the operations $\star_\bG$ and $\cHom^{\star_\bG}$, we refer to \cite{IK}.

\begin{proof}[Proof of \cref{theorem:separation}]
    We have 
    \begin{equation}
    \begin{aligned}
        & \Hom_{\mu^\bG(T^*M;u<c)}(\cE, \cF) \\ 
        \cong {} & \Hom_{\mu^\bG(T^*M;u<c)}(\bigoplus_{d \in \bG}\bK_{M \times \bR_{u<c} \times [d,\infty)} \star_\bG \cE, \cF) \\
        \cong {} & \Hom_{\mu^\bG(T^*M;u<c)}(\bigoplus_{d \in \bG}\bK_{M \times \bR_{u<c} \times [d,\infty)} ,\cHom^{\star_\bG}(\cE, \cF)) \\
        \cong {} & \Hom_{\Sh(M\times \bR_{u<c}\times \bR_t)}(\bK_{M \times \bR_{u<c} \times [d,\infty)} ,\frakf(\cHom^{\star_\bG}(\cE, \cF)) \\
        \cong {} & \Hom_{\Sh^{(\bG)}(M\times \bR_{u<c}\times \bR_t)}(\bK_{M \times \bR_{u<c} \times [d,\infty)} , {p_1}_* \cHom_{M \times \bR^2}(p_2^{-1}\cF, m^!\cG)).
    \end{aligned}
    \end{equation}
Here $\Sh^{(\bG)}(M\times \bR_{u<c}\times \bR_t)$ is the derived category of equivariant sheaves with respect to the trivial $\bG$-action. The last space is $\bG$-invariant of the $\bG$-representation $\Hom_{\Sh(M\times \bR_{u<c}\times \bR_t)}(\bK_{M \times \bR_{u<c} \times [c,\infty)} , {p_1}_* \cHom_{M \times \bR^2}(p_2^{-1}\cF, m^!\cG))$. Hence the vanishing follows from that of the non-equivariant version (\cref{proposition:sep_nonequiv}).
\end{proof}

\section*{Acknowledgments}

The first author was supported by JSPS KAKENHI Grant Numbers JP21K13801.
The second was supported by JSPS KAKENHI Grant Numbers JP22K13912 and JP20H01794.
The first author thanks Tomohiro Asano for the helpful discussions. 

\printbibliography

@incollection{GS14,
  title={Microlocal theory of sheaves and {T}amarkin's non displaceability theorem},
  author={Guillermou, St{\'e}phane and Schapira, Pierre},
  booktitle={Homological mirror symmetry and tropical geometry},
  pages={43--85},
  year={2014},
  publisher={Springer}
}

@book {KS90,
    AUTHOR = {Kashiwara, Masaki and Schapira, Pierre},
     TITLE = {Sheaves on manifolds},
    SERIES = {Grundlehren der Mathematischen Wissenschaften},
    VOLUME = {292},
 PUBLISHER = {Springer-Verlag, Berlin},
      YEAR = {1990},
     PAGES = {x+512},
      ISBN = {3-540-51861-4},
   MRCLASS = {58G07 (18F20 32C38 35A27)},
  MRNUMBER = {1299726 (95g:58222)},
}

@article{AISQ,
  title={Sheaf quantization and intersection of rational {L}agrangian immersions},
  author={Asano, Tomohiro and Ike, Yuichi},
  journal={Annales de l'Institut Fourier},
  volume={73},
  number={4},
  pages={1533--1587},
  doi={https://doi.org/10.5802/aif.3554},
  year={2023}
}

@InProceedings{Tamarkin,
	author="Tamarkin, Dmitry",
	title="Microlocal Condition for Non-displaceability",
	booktitle="Algebraic and Analytic Microlocal Analysis, Springer Proceedings in Mathematics and Statistics 269",
	year="2018",
	publisher="Springer, Cham",
	pages="99--223"
}

@misc{IK,
      title={Microlocal categories over Novikov rings}, 
      author={Yuichi Ike and Tatsuki Kuwagaki},
      year={2023},
      eprint={2307.01561},
      archivePrefix={arXiv},
      primaryClass={math.SG}
}

\noindent Yuichi Ike:
Department of Mathematics, University of Tokyo, 3-8-1 Komaba Meguro-ku, Tokyo 153-8914, Japan

\noindent
\textit{E-mail address}: \texttt{ike[at]ms.u-tokyo.ac.jp},
\medskip

\noindent Tatsuki Kuwagaki: 
Department of Mathematics, Kyoto University, Kitashirakawa Oiwake-cho, Sakyo-ku, Kyoto 606-8502, Japan.

\noindent 
\textit{E-mail address}: \texttt{tatsuki.kuwagaki.a.gmail.com}

\end{document}